%
\documentclass[runningheads]{llncs}
\usepackage{graphicx}

\usepackage{amssymb,amsmath}
\usepackage{amsrefs, float}
\usepackage{bbold,stackrel}
 

\newcommand{\Z}{{\textsf{\textup{Z}}}}



\newtheorem{thm}{Theorem}

\newtheorem{cor}[thm]{Corollary}
\newtheorem{defi}[thm]{Definition}
\newtheorem{rem}[thm]{Remark}
\newtheorem{nota}[thm]{Notation}

\newtheorem{princ}[thm]{Principle}

\newcommand\be{\begin{equation}}
\newcommand\ee{\end{equation}} 

\usepackage{amsmath,amsfonts} 

\usepackage[applemac]{inputenc}

\def\bdefi{\begin{defi}\rm}
\def\edefi{\end{defi}}
\def\bnota{\begin{nota}\rm}
\def\enota{\end{nota}}
\def\FIVE{\Pi_{1}^{1}\text{-\textup{\textsf{CA}}}_{0}}

\def\SIXK{\Pi_{k}^{1}\text{-\textsf{\textup{CA}}}_{0}^{\omega}}

\def\L{\textsf{\textup{L}}}

\def\RCA{\textup{\textsf{RCA}}}
\def\({\textup{(}}
\def\){\textup{)}}

\def\cc{\textup{\textsf{c}}}

\def\RCAo{\textup{\textsf{RCA}}_{0}^{\omega}}
\def\ACAo{\textup{\textsf{ACA}}_{0}^{\omega}}
\def\WKL{\textup{\textsf{WKL}}}

\def\WWKL{\textup{\textsf{WWKL}}}
\def\bye{\end{document}}
\def\N{{\mathbb  N}}
\def\Q{{\mathbb  Q}}
\def\R{{\mathbb  R}}
\def\A{{\textsf{\textup{A}}}}

\def\SS{\textup{\textsf{S}}}

\def\di{\rightarrow}

\def\asa{\leftrightarrow}

\def\ACA{\textup{\textsf{ACA}}}

\def\QFAC{\textup{\textsf{QF-AC}}}

\def\WHBC{\textup{\textsf{WHBC}}}

\def\NIN{\textup{\textsf{NIN}}}

\def\HBC{\textup{\textsf{HBC}}}

\def\BOOT{\textup{\textsf{BOOT}}}

\def\NFP{\textup{\textsf{NFP}}}

\def\HBU{\textup{\textsf{HBU}}}

\def\WHBU{\textup{\textsf{WHBU}}}

\def\eps{\varepsilon}

\def\ECF{\textup{\textsf{ECF}}}

\setcounter{tocdepth}{3}
\numberwithin{equation}{section}
\numberwithin{thm}{section}

%

\begin{document}
\title{Splittings and robustness for the Heine-Borel theorem\thanks{
Supported by the \emph{Deutsche Forschungsgemeinschaft} via the DFG grant SA3418/1-1.}}
%
%
\author{Sam Sanders\inst{1}}
\authorrunning{S.\ Sanders}
%
\institute{Institute for Philosophy II, RUB, Bochum, Germany\\ \email{sasander@me.com}}
%


\maketitle              
\begin{abstract}
The Heine-Borel theorem for uncountable coverings has recently emerged as an interesting and central principle in higher-order Reverse Mathematics and computability theory, formulated as follows:
$\HBU$ is the Heine-Borel theorem for uncountable coverings given as $\cup_{x\in [0,1]}(x-\Psi(x), x+\Psi(x))$ for arbitrary $\Psi:[0,1]\di \R^{+}$, i.e.\ the original formulation going back to Cousin (1895) and Lindel\"of (1903).   
In this paper, we show that $\HBU$ is equivalent to its restriction to functions \emph{continuous almost everywhere}, an elegant robustness result.  We also obtain a nice splitting $\HBU\asa [\WHBU^{+}+\HBC_{0}+\WKL]$ where $\WHBU^{+}$ is 
a strengthening of Vitali's covering theorem and where $\HBC_{0}$ is the Heine-Borel theorem for countable collections (and \textbf{not sequences}) of basic open intervals, as formulated by Borel himself in 1898.  
\keywords{Higher-order Reverse Mathematics  \and Heine-Borel theorem \and Vitali covering theorem \and splitting \and robustness.}
\end{abstract}
\section{Introduction and preliminaries}\label{intro}
We sketch our aim and motivation within the \emph{Reverse Mathematics} program (Section \ref{krum}) and introduce some essential axioms and definitions (Section \ref{prelim}).
\subsection{Aim and motivation}\label{krum}
Reverse Mathematics (RM hereafter) is a program in the foundations of mathematics initiated by Friedman (\cites{fried, fried2}) and developed extensively by Simpson and others (\cites{simpson1, simpson2}); an introduction to RM for the `mathematician in the street' may be found in \cite{stillebron}.  We assume basic familiarity with RM, including Kohlenbach's \emph{higher-order} RM introduced in \cite{kohlenbach2}.  
Recent developments in higher-order RM, including our own, are published in \cite{dagsamV, dagsamVI, dagsamIII, dagsamX, dagsamVII, dagsamIX, dagsamXI}.

\smallskip

Now, a \emph{splitting} $A\asa [B+C]$ is a relatively rare phenomenon in second-order RM where a natural theorem $A$ can be \emph{split} into two \emph{independent} natural parts $B$ and $C$.
Splittings are quite common in higher-order RM, as studied in some detail in \cite{samsplit}.  An unanswered question here is whether the higher-order generalisations of the Big Five of RM (and related principles) have natural splittings.  

\smallskip

In this paper, we study the Vitali and Heine-Borel covering theorems for uncountable coverings with an eye on splittings.  
In particular, our starting point is $\HBU$, defined in Section \ref{prelim}, which is the Heine-Borel theorem for uncountable coverings $\cup_{x\in [0,1]}I_{x}^{\Psi}$ for \emph{arbitrary} third-order $\Psi:[0,1]\di \R^{+}$ and $I_{x}^{\Psi}\equiv (x-\Psi(x), x+\Psi(x))$.  This kind of coverings was already studied by Cousin in 1895 (\cite{cousin1}) and Lindel\"of in 1903 (\cite{blindeloef}).  
In Section~\ref{X}, we obtain an elegant splitting involving $\HBU$, namely as follows:  
\be\label{baggin}
\HBU\asa [\WHBU^{+}+\HBC_{0}+\WKL ],
\ee
where $\WHBU^{+}$ is a strengthening of the Vitali covering theorem and where $\HBC_{0}$ is the Heine-Borel theorem for \emph{countable collections} (and \textbf{not} sequences) of open intervals, as formulated by Borel himself in \cite{opborrelen2}.
In Section \ref{Y}, we prove $\HBU\asa\HBU_{\ae}$, where the latter is $\HBU$ restricted to functions $\Psi:[0,1]\di\R^{+}$ continuous \emph{almost everywhere} on the unit interval.   By contrast, the same restriction for the Vitali covering theorem results in a theorem equivalent to \emph{weak weak K\"onig's lemma} $\WWKL$.   The results in Section \ref{Y} were obtained following the study of splittings involving `continuity almost everywhere'.    
The proof of Theorem \ref{frigi} (in a stronger system) was suggested to us by Dag Normann.   In general, this paper constitutes a spin-off from our joint project with Dag Normann on the Reverse Mathematics and computability theory of the uncountable (see \cites{dagsamIII, dagsamX, dagsamXI}). 

\smallskip

Finally, the foundational and historical significance of our results is as follows.
\begin{rem}\label{fundies}\rm
First of all, as shown in \cites{dagsamIII, dagsamV, dagsamVII}, the third-order statements $\HBU$ and $\WHBU$ cannot be proved $\Z_{2}^{\omega}$, a conservative extension of $\Z_{2}$ based on third-order comprehension functionals.
A sceptic of third-order objects could `downplay' this independence result by pointing to the outermost quantifier of $\HBU$ and $\WHBU$ and declare that the strength of these principles is simply due to the quantification 
over \emph{all} third-order functions.  This point is moot in light of $\HBU\asa \HBU_{\ae}$ proved in Theorem \ref{frigi}, and the central role of `continuity almost everywhere' in e.g.\ the study of the Riemann integral and measure theory.

\smallskip

Secondly, our first attempt at obtaining a splitting for $\HBU$ was to decompose the latter as $\HBU_{\ae}+\WHBU$, where $\WHBU$ allows one to reduce an arbitrary covering to a covering generated by a function that is continuous almost everywhere.  
Alas, this kind of splitting does not yield \emph{independent} conjuncts, which is why we resort to stronger notions like \emph{countability}, namely in Section~\ref{X}.  

\smallskip

Thirdly, the splitting in \eqref{baggin} has some historical interest as well: Borel himself formulates the Heine-Borel theorem in \cite{opborrelen2} using \emph{countable collections} of intervals rather than \emph{sequences} of intervals (as in second-order RM).  In fact, Borel's proof of the Heine-Borel theorem in \cite{opborrelen2}*{p.\ 42} starts with: \emph{Let us enumerate our intervals, one after the other, according to whatever law, but determined}.  He then proceeds with the usual `interval-halving' proof, similar to Cousin in \cite{cousin1}.
\end{rem}

\subsection{Preliminaries}\label{prelim}
We introduce some axioms and definitions from (higher-order) RM needed below.  We refer to \cite{kohlenbach2}*{\S2} or \cite{dagsamIII}*{\S2} for the definition of Kohlebach's base theory $\RCAo$, and basic definitions like the real numbers $\R$ in $\RCAo$.  For completeness, some definitions are included in the technical appendix, namely Section \ref{app}. 

\subsubsection{Some axioms of higher-order arithmetic}\label{prelim1}
First of all, the functional $\varphi$ in $(\exists^{2})$ is clearly discontinuous at $f=11\dots$; in fact, $(\exists^{2})$ is equivalent to the existence of $F:\R\di\R$ such that $F(x)=1$ if $x>_{\R}0$, and $0$ otherwise (\cite{kohlenbach2}*{\S3}).  
\be\label{muk}\tag{$\exists^{2}$}
(\exists \varphi^{2}\leq_{2}1)(\forall f^{1})\big[(\exists n)(f(n)=0) \asa \varphi(f)=0    \big]. 
\ee
Related to $(\exists^{2})$, the functional $\mu^{2}$ in $(\mu^{2})$ is also called \emph{Feferman's $\mu$} (\cite{kohlenbach2}).
\begin{align}\label{mu}\tag{$\mu^{2}$}
(\exists \mu^{2})(\forall f^{1})\big[ (\exists n)(f(n)=0) \di [f(\mu(f))=0&\wedge (\forall i<\mu(f))(f(i)\ne 0) ]\\
& \wedge [ (\forall n)(f(n)\ne0)\di   \mu(f)=0]    \big].\notag
\end{align}
Intuitively, $\mu^{2}$ is the least-number-operator, i.e.\ $\mu(f)$ provides the least $n\in \N$ such that $f(n)=0$, if such there is.  
We have $(\exists^{2})\asa (\mu^{2})$ over $\RCAo$ and $\ACAo\equiv\RCAo+(\exists^{2})$ proves the same second-order sentences as $\ACA_{0}$ by \cite{hunterphd}*{Theorem~2.5}. 

\smallskip

Secondly, the Heine-Borel theorem states the existence of a finite sub-covering for an open covering of certain spaces. 
Now, a functional $\Psi:\R\di \R^{+}$ gives rise to the \emph{canonical cover} $\cup_{x\in I} I_{x}^{\Psi}$ for $I\equiv [0,1]$, where $I_{x}^{\Psi}$ is the open interval $(x-\Psi(x), x+\Psi(x))$.  
Hence, the uncountable covering $\cup_{x\in I} I_{x}^{\Psi}$ has a finite sub-covering by the Heine-Borel theorem; in symbols:
\begin{princ}[$\HBU$]
$(\forall \Psi:\R\di \R^{+})(\exists  y_{0}, \dots, y_{k}\in I){(\forall x\in I)}(x\in \cup_{i\leq k}I_{y_{i}}^{\Psi}).$
\end{princ}
Cousin and Lindel\"of formulate their covering theorems using canonical covers in \cites{cousin1, blindeloef}.
This restriction does not make much of a difference, as studied in \cite{sahotop}.

\smallskip

Thirdly, let $\WHBU$ be the following weakening of $\HBU$:
\begin{princ}[$\WHBU$] For any $\Psi:\R\di \R^{+} $ and $ \eps>_{\R}0$, there are pairwise distinct $ y_{0}, \dots, y_{k}\in I$ with $1-\eps <_{\R}\sum_{i\leq k}|J_{y_{i}}^{\Psi}| $, where $J_{y_{i+1}}^{\Psi}:= I_{y_{i+1}}^{\Psi}\setminus (\cup_{j\leq i}I_{y_{i}}^{\Psi}) $.
\end{princ}
As discussed at length in \cite{dagsamVI}, $\WHBU$ expresses the essence of the Vitali covering theorem for \emph{uncountable} coverings; Vitali already
considered the latter in \cite{vitaliorg}.  
Basic properties of the \emph{gauge integral} (\cite{zwette}) are equivalent to $\HBU$ while $\WHBU$ is equivalent to basic properties of the Lebesgue integral (without RM-codes; \cite{dagsamVI}).  
By \cite{dagsamIII, dagsamV,dagsamVI}, $\Z_{2}^{\Omega}$ proves $\HBU$ and $\WHBU$, but $\Z_{2}^{\omega}$ cannot.  The exact definition of $\Z_{2}^{\omega}$ and $\Z_{2}^{\Omega}$ is in the aforementioned references and Section~\ref{further}. What is relevant here is that $\Z_{2}^{\omega}$ and $\Z_{2}^{\Omega}$ are conservative extensions of $\Z_{2}$ by \cite{hunterphd}*{Cor.\ 2.6}, i.e.\ the former prove the same second-order sentences as the latter. 

\smallskip

We note that $\HBU$ (resp.\ $\WHBU$) is the higher-order counterpart of $\WKL$ (resp.\ $\WWKL$), i.e.\ \emph{weak K\"onig's lemma} (resp.\ \emph{weak weak K\"onig's lemma}) from RM as the $\ECF$-translation (\cites{troelstra1, kohlenbach2}) maps $\HBU$ (resp.\ $\WHBU$) to $\WKL$ (resp.\ $\WWKL$), i.e.\ these are (intuitively) \emph{weak} principles.  We refer to \cite{kohlenbach2}*{\S2} or Remark~\ref{ECF} for a discussion of the relation between $\ECF$ and $\RCAo$. 

\smallskip

Finally, the aforementioned results suggest that (higher-order) comprehension as in $\Z_{2}^{\omega}$ is not the right way of measuring the strength of $\HBU$. 
As a better alternative, we have introduced the following axiom in \cite{samph}.
\begin{princ}[$\BOOT$]
$(\forall Y^{2})(\exists X \subset \N)(\forall n^{0})\big[ n\in X \asa (\exists f^{1})(Y(f, n)=0)    \big]. $
\end{princ}
By \cite{samph}*{\S3}, $\BOOT$ is equivalent to convergence theorems for \emph{nets}, we have the implication $\BOOT\di \HBU$, and $\RCAo+\BOOT$ has the same first-order strength as $\ACA_{0}$.  
Moreover, $\BOOT$ is a natural fragment of Feferman's \emph{projection axiom} $\textsf{(Proj1)}$ from \cite{littlefef}.
Thus, $\BOOT$ is a natural axiom that provides a better `scale' for measuring the strength of $\HBU$ and its ilk, as discussed in \cite{samph, dagsamX}.

\subsubsection{Some basic definitions}\label{prelim2}
We introduce the higher-order definitions of `open' and `countable' set, as can be found in e.g.\ \cite{dagsamX, dagsamVII, dagsamXI}. 

\smallskip

First of all, open sets are represented in second-order RM as countable unions of basic open sets (\cite{simpson2}*{II.5.6}), and we refer to such sets as `RM-open'.
By \cite{simpson2}*{II.7.1}, one can effectively convert between RM-open sets and (RM-codes for) continuous characteristic functions.
Thus, a natural extension of the notion of `open set' is to allow \emph{arbitrary} (possibly discontinuous) characteristic functions, as is done in e.g.\ \cite{dagsamVII, dagsamX, samnetspilot}, which motivates the following definition.  
\bdefi[Sets in $\RCAo$]\label{openset}
We let $Y: \R \di \R$ represent subsets of $\R$ as follows: we write `$x \in Y$' for `$Y(x)>_{\R}0$' and call a set $Y\subseteq \R$ Ôopen' if for every $x \in Y$, there is an open ball $B(x, r) \subset Y$ with $r^{0}>0$.  
A set $Y$ is called `closed' if the complement, denoted $Y^{c}=\{x\in \R: x\not \in Y \}$, is open. 
\edefi
\noindent
For open $Y$ as in Definition \ref{openset}, the formula `$x\in Y$' has the same complexity (modulo higher types) as for RM-open sets, while given $(\exists^{2})$ it is equivalent to a `proper' characteristic function, only taking values `0' and `$1$'.  Hereafter, an `(open) set' refers to Definition \ref{openset}; `RM-open set' refers to the definition from second-order RM, as in e.g.\ \cite{simpson2}*{II.5.6}.

\smallskip

Secondly, the definition of `countable set' (Kunen; \cite{kunen}) is as follows in $\RCAo$. 
\bdefi[Countable subset of $\R$]\label{standard}~
A set $A\subseteq \R$ is \emph{countable} if there exists $Y:\R\di \N$ such that $(\forall x, y\in A)(Y(x)=_{0}Y(y)\di x=_{\R}y)$. 
If $Y:\R\di \N$ is also \emph{surjective}, i.e.\ $(\forall n\in \N)(\exists x\in A)(Y(x)=n)$, we call $A$ \emph{strongly countable}.
\edefi
Hereafter, `(strongly) countable' refers to Definition\ref{standard}, unless stated otherwise.  We note that `countable' is defined in second-order RM using \emph{sequences} (\cite{simpson2}*{V.4.2}), a notion we shall call `enumerable'.

\smallskip

Thirdly, we have explored the connection between $\HBU$, generalisations of $\HBU$, and fragments of the \emph{neighbourhood function principle} $\NFP$ from \cite{troeleke1} in \cites{samph, sahotop}.
In each case, nice equivalences were obtained \emph{assuming $\A_{0}$ as follows}. 
\begin{princ}[$\A_{0}$] For $Y^{2}$ and $A(\sigma)\equiv (\exists g\in 2^{\N})(Y(g, \sigma)=0)$, we have
\[
(\forall f\in \N^{\N})(\exists n\in \N)A(\overline{f}n)\di (\exists G^{2})(\forall f\in \N^{\N})A(\overline{f}G(f)), 
\]
where $\overline{f}n $ is the finite sequence $ \langle f(0), f(1), \dots, f(n-1) \rangle$.
\end{princ}
As discussed in \cites{sahotop, samph}, the axiom $\A_{0}$ is a fragment of $\NFP$ and can be viewed as a generalisation of $\QFAC^{1,0}$, included in $\RCAo$.  As an alternative to $\A_{0}$, one could add `extra data' or moduli to the theorems to be studied.  
\section{Main results}
In Section \ref{Y}, we show that $\HBU$ is equivalent to $\HBU_{\ae}$, i.e.\ the restriction to functions continuous almost everywhere , while the same restriction applied to $\WHBU$ results in a theorem equivalent to $\WWKL$ (see \cite{simpson2}*{X.1} for the latter).  In Section \ref{X}, we establish the splitting \eqref{baggin} involving $\HBU$.   
\subsection{Ontological parsimony and the Heine-Borel theorem}\label{Y}
We introduce $\HBU_{\text{\ae}}$, the restriction of $\HBU$ from Section \ref{prelim} to functions \emph{continuous almost everywhere}, and establish $\HBU\asa \HBU_{\ae}$ over $\RCAo$.
The same restriction for $\WHBU$ turns out to be equivalent to \emph{weak weak K\"onig's lemma} ($\WWKL$; see \cite{simpson2}*{X.1}), well-known from second-order RM. 

\smallskip

We first need the following definition, where we note that the usual\footnote{A set $A\subset \R$ is \emph{measure zero} if for any $\eps>0$ there is a sequence of basic open intervals $(I_{n})_{n\in \N}$ such that $\cup_{n\in \N}I_{n}$ covers $A$ and has total length below $\eps$.} definition of `measure zero' is used in RM.  
\bdefi[Continuity almost everywhere]\label{clier}
We say that $\Psi:[0,1]\di \R$ is \emph{continuous almost everywhere} if it is continuous outside of an RM-closed set $E\subset [0,1]$ which has measure zero.
\edefi
Let $\HBU_{\ae}$ be $\HBU$ restricted to functions continuous almost everywhere as in the previous definition.  The proof of the following theorem (in a stronger system) was suggested by Dag Normann, for which we are grateful.  

\begin{thm}\label{frigi}
The system $\RCAo$ proves $\HBU\asa \HBU_{\ae}$.
\end{thm}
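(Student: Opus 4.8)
The plan is to prove the non-trivial direction $\HBU_{\ae}\di\HBU$, since the reverse implication is immediate ($\HBU_{\ae}$ is a restriction of $\HBU$). So fix an \emph{arbitrary} $\Psi:[0,1]\di\R^{+}$; I want to produce, from the canonical cover $\cup_{x\in I}I_{x}^{\Psi}$, a function $\Phi:[0,1]\di\R^{+}$ that is continuous almost everywhere and whose canonical cover refines (or is refined by, in the right direction) that of $\Psi$, so that a finite subcover for $\Phi$ yields one for $\Psi$. The natural device is to \emph{thin out} $\Psi$ on a small set: partition $[0,1]$ by a fine grid and, on each grid cell, replace $\Psi$ by (an under-approximation of) its infimum over that cell, being careful that the resulting step function still covers — i.e.\ each point $x$ still lies in some $I_{y}^{\Phi}$. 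A step function is continuous outside the finite grid of breakpoints, which is RM-closed and of measure zero, so $\Phi$ is continuous almost everywhere; then $\HBU_{\ae}$ applies to $\Phi$.

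First I would make the grid construction precise. Working in $\RCAo$ (so without $(\exists^{2})$!), one cannot in general compute infima of $\Psi$ over intervals, so the approximation has to be done more cleverly — presumably by a \emph{pointwise} modification: for each $x$, let $\Phi(x)$ be $\Psi$ evaluated at the left endpoint of the dyadic interval of some level $n(x)$ containing $x$, where $n(x)$ is chosen just large enough (using $\Psi(x)$ itself) that $x$ is still covered by the interval centred at that endpoint. Concretely, if $x\in[k/2^{n},(k{+}1)/2^{n})$ with $2^{-n}<\Psi(x)/2$, set $\Phi(x):=\Psi(k/2^{n})$; this is a step function (finitely many values on each dyadic level, and the level used is locally constant away from dyadic rationals), hence continuous outside a measure-zero RM-closed set, and one checks $x\in I_{k/2^{n}}^{\Psi}\subseteq\bigcup_{x'\in I}I_{x'}^{\Phi}$ so that $\cup_{x}I_{x}^{\Phi}$ still covers $[0,1]$. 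A finite subcover $I_{y_{0}}^{\Phi},\dots,I_{y_{m}}^{\Phi}$ then translates back: each $y_{j}$ is a dyadic rational $k_{j}/2^{n_{j}}$ and $I_{y_{j}}^{\Phi}=(y_{j}-\Psi(y_{j}),y_{j}+\Psi(y_{j}))=I_{y_{j}}^{\Psi}$, so $\{y_{j}\}$ already witnesses $\HBU$ for $\Psi$.

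The main obstacle I anticipate is twofold. First, definability: ensuring $\Phi$ is genuinely a function $[0,1]\di\R^{+}$ definable in $\RCAo$ and genuinely \emph{continuous almost everywhere} in the sense of Definition~\ref{clier} — i.e.\ one must exhibit an explicit RM-closed measure-zero set $E$ (the dyadic rationals, or a suitable countable union of shrinking neighbourhoods thereof) outside of which $\Phi$ is provably continuous, and argue the discontinuity set is contained in it. Because the "level" $n(x)$ depends on $\Psi(x)$, near a dyadic rational the level can jump, so $E$ must absorb exactly those jump points; showing $E$ is measure zero and RM-closed is the delicate bookkeeping. Second, and more subtly, one must verify the covering property is preserved \emph{and} that the finite subcover pulls back without needing more witnesses than allowed — but since $I_{y}^{\Phi}\subseteq I_{y'}^{\Psi}$ for an appropriate $y'$ (or equals $I_{y}^{\Psi}$ when $y$ is dyadic, as above), this step should be routine once the construction is set up. I would also double-check that no appeal to $(\exists^{2})$, $\A_{0}$, or $\BOOT$ sneaks in, since the theorem is stated over plain $\RCAo$; the whole point is that the reduction is \emph{effective}.

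Alternatively, if the direct step-function reduction runs into definability trouble over $\RCAo$, a fallback is to mimic the known reduction of $\HBU$ to its restriction to coverings "of a special form" from \cite{sahotop} (cited in the excerpt right after the statement of $\HBU$), and then observe that such special-form covers are automatically generated by functions continuous almost everywhere; this reduces the new content to checking the "a.e.\ continuity" of the already-available normal form.
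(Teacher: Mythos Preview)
Your direct reduction has a real gap. The function $\Phi$ you define need not be continuous almost everywhere: the level $n(x)$ depends on $\Psi(x)$, so wherever $\Psi$ crosses a power of $2$ the level jumps, and with it the dyadic left endpoint at which you sample $\Psi$. Since $\Psi$ is completely arbitrary, this can happen on a set of positive measure, or everywhere. Your parenthetical ``the level used is locally constant away from dyadic rationals'' is simply false---local constancy of $n(x)$ would require local constancy (up to powers of $2$) of $\Psi$, which you do not have. The pull-back step is also broken: $\HBU_{\ae}$ returns arbitrary $y_{0},\dots,y_{m}\in[0,1]$, not dyadic rationals, and $\Phi(y_{j})=\Psi(k_{j}/2^{n_{j}})\ne\Psi(y_{j})$ in general, so $I_{y_{j}}^{\Phi}$ is neither $I_{y_{j}}^{\Psi}$ nor contained in any single $I_{z}^{\Psi}$.

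The paper sidesteps all of this with the standard excluded-middle split on $(\exists^{2})$. If $\neg(\exists^{2})$, every $\R\di\R$-function is continuous, so $\HBU_{\ae}$ is literally $\HBU$. If $(\exists^{2})$, one does not try to smooth the given $\Psi$ at all; instead one proves $\HBU_{\ae}\di\HBU_{\cc}$ (the Cantor-space version, known equivalent to $\HBU$). Given $G^{2}$, define $\Psi:[0,1]\di\R^{+}$ to be $d(x,\mathcal{C})$ off the middle-third Cantor set $\mathcal{C}$ and $2^{-G(I(x))}$ on $\mathcal{C}$, where $I$ inverts the homeomorphism $2^{\N}\di\mathcal{C}$. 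This $\Psi$ is continuous off $\mathcal{C}$, which is RM-closed and measure zero, so $\HBU_{\ae}$ applies; intervals centred outside $\mathcal{C}$ miss $\mathcal{C}$ by construction, so the finitely many centres lying in $\mathcal{C}$ already cover $\mathcal{C}$ and pull back to a finite subcover of $2^{\N}$. Your worry that ``no appeal to $(\exists^{2})$ sneaks in'' is exactly what the case split legitimises: one may freely use $(\exists^{2})$ in the second branch because the first branch is handled separately over $\RCAo$.
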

\begin{proof}
First of all, as noted in Section \ref{prelim}, $(\exists^{2})$ is equivalent to the existence of a discontinuous $\R\di \R$-function, namely by \cite{kohlenbach2}*{Prop.\ 3.12}.
Thus, in case $\neg(\exists^{2})$, all functions on $\R$ are continuous.  In this case, we trivially obtain $\HBU\asa \HBU_{\ae}$.  Since $\RCAo$ is a classical system, we
have the law of excluded middle as in $\neg(\exists^{2})\vee (\exists^{2})$.  As we have provided a proof in the first case $\neg(\exists^{2})$, it suffices to provide a proof assuming $(\exists^{2})$, and the law of excluded middle finishes the proof.  
Hence, for the rest of the proof, we may assume $(\exists^{2})$.

\smallskip

Secondly, the \emph{Cantor middle third set} $\mathcal{C}\subset [0,1]$ is available in $\RCA_{0}$ by (the proof of) \cite{simpson2}*{IV.1.2} as an RM-closed set, as well as the well-known recursive homeomorphism from Cantor space $2^{\N}$ to $\mathcal{C}$ defined as $H:2^{\N}\di [0,1]$ and $H(f):=\sum_{n=0}^{\infty}\frac{2 f(n)}{3^{n+1}}$.   Note that given $\exists^{2}$, we can decide whether $x\in \mathcal{C}$ or not.  

\smallskip

Thirdly, we prove $\HBU_{\ae}\di \HBU_{\cc}$, where the latter is $\HBU$ for $2^{\N}$ as follows:  
\be\tag{$\HBU_{\cc}$}
(\forall G^{2})(\exists f_{0}, \dots, f_{k}\in 2^{\N})(\forall g\in 2^{\N})(\exists i\leq k)(g\in [\overline{f_{i}}G(f_{i})]  )
\ee
and where $[\sigma]$ is the open neighbourhood in $2^{\N}$ of sequences starting with the finite binary sequence $\sigma$.
The equivalence $\HBU\asa \HBU_{\cc}$ may be found in \cites{dagsamIII, dagsamV}.
Now assume $\HBC_{\ae}$ and fix $G^{2}$ and define $\Psi:[0,1]\di \R^{+}$ using $(\exists^{2})$ as:
\be\label{bydef}
\Psi(x):=
\begin{cases}
d(x, \mathcal{C})  & x\not\in \mathcal{C}\\
\frac{1}{2^{G(I(x))}} & \textup{ otherwise } 
\end{cases},
\ee
where $I(x)$ is the unique $f\in 2^{\N}$ such that $H(f)=x$ in case $x\in \mathcal{C}$, and $00\dots$ otherwise.
Note that the distance function $d(x, \mathcal{C})$ exists given $\ACA_{0}$ by \cite{withgusto}*{Theorem 1.2}.
Clearly, $\exists^{2}$ allows us to define this function as a third-order object that is continuous on $[0,1]\setminus\mathcal{C}$.
Since $\mathcal{C}$ has measure zero (and is RM-closed), apply $\HBU_{\ae}$ to $\cup_{x\in [0,1]}I_{x}^{\Psi}$.  
Let $y_{0}, \dots, y_{k}$ be such that $\cup_{i\leq k}I_{y_{i}}^{\Psi}$ covers $[0,1]$.  
By the definition of $\Psi$ in \eqref{bydef}, if $x\in [0,1]\setminus \mathcal{C}$, then $\mathcal{C}\cap I_{x}^{\Psi}=\emptyset$.
Hence, let $z_{0}, \dots, z_{m}$ be those $y_{i}\in \mathcal{C}$ for $i\leq k$ and note that $\cup_{j\leq m}I_{z_{j}}^{\Psi}$ covers $\mathcal{C}$.
Clearly, $I(z_{0}), \dots, I(z_{m})$ yields a finite sub-cover of $\cup_{f\in 2^{\N}}[\overline{f}G(f)]$, and $\HBU_{\cc}$ follows. 
%
%
%
%
%
\qed \end{proof}
We could of course formulate $\HBU_{\ae}$ with the higher-order notion of `closed set' from \cite{dagsamVII}, and the equivalence from the theorem would still go through. 
The proof of the theorem also immediately yields the following. 
\begin{cor}[$\ACAo$] $\HBU$ is equivalent to the Heine-Borel theorem for canonical coverings $\cup_{x\in E}I_{x}^{\Psi}$, where $E\subset [0,1]$ is RM-closed and has measure zero. 
\end{cor}
As expected, Theorem \ref{frigi} generalises to principles that imply $\HBU$ over $\RCAo$ (see \cite{dagsamX}*{Figure 1} for an overview) and that boast a third-order functional to which the `continuous almost everywhere' restriction can be naturally applied.
An example is the following corollary involving $\BOOT$.  
\begin{cor}
The system $\RCAo$ proves $\BOOT\asa \BOOT_{\ae}$, where the latter is 
\[
(\exists X \subset \N)(\forall n^{0})\big[ n\in X \asa (\exists x\in [0,1])(Y(x, n)=0)    \big],
\]
where $\lambda x.Y(x, n)$ is continuous almost everywhere on $[0,1]$ for any fixed $n\in \N$.
\end{cor}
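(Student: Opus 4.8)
The plan is to mimic the structure of the proof of Theorem~\ref{frigi}, reducing everything to the case $(\exists^2)$ and then transporting a covering argument from $2^\N$ to the Cantor set $\mathcal C$. First I would dispose of the case $\neg(\exists^2)$: as in Theorem~\ref{frigi}, if there is no discontinuous $\R\di\R$-function then every $\lambda x.Y(x,n)$ is trivially continuous, hence continuous almost everywhere, so $\BOOT$ and $\BOOT_{\ae}$ coincide. Since $\RCAo$ proves $\neg(\exists^2)\vee(\exists^2)$, it remains to argue under $(\exists^2)$ — and note $(\exists^2)$ gives $\ACAo$, which we used in Theorem~\ref{frigi} for the distance function $d(x,\mathcal C)$ and for deciding membership in $\mathcal C$.

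The forward direction $\BOOT\di\BOOT_{\ae}$ is immediate since $\BOOT_{\ae}$ is just the instance of $\BOOT$ restricted to functionals $Y$ with the continuity-almost-everywhere property (after coding $\R$-inputs as type-$1$ inputs in the obvious way, using $(\exists^2)$). So the substance is $\BOOT_{\ae}\di\BOOT$. Given arbitrary $Y^2$, I would build $Z:\R\times\N\di\R$ (or $\N$, after composing with $\exists^2$) so that $\lambda x.Z(x,n)$ is continuous off $\mathcal C$ for each $n$, and such that the $\BOOT_{\ae}$-instance for $Z$ computes the $\BOOT$-instance for $Y$. Concretely, using the recursive homeomorphism $H:2^\N\di\mathcal C$ and its inverse $I(x)$ (the unique $f\in 2^\N$ with $H(f)=x$ when $x\in\mathcal C$, and $00\dots$ otherwise, all decidable given $\exists^2$), set $Z(x,n):= Y(I(x),n)$ when $x\in\mathcal C$ and, say, $Z(x,n):=1$ (a fixed nonzero value, witnessing `no solution') when $x\notin\mathcal C$. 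Then $\lambda x.Z(x,n)$ is continuous on $[0,1]\setminus\mathcal C$ for each fixed $n$ — indeed locally constant there, since near any $x\notin\mathcal C$ the function is constantly $1$ — and $\mathcal C$ is RM-closed of measure zero, so $\lambda x.Z(x,n)$ is continuous almost everywhere. Applying $\BOOT_{\ae}$ yields $X\subset\N$ with $n\in X\asa(\exists x\in[0,1])(Z(x,n)=0)$. By construction $(\exists x\in[0,1])(Z(x,n)=0)\asa(\exists x\in\mathcal C)(Y(I(x),n)=0)\asa(\exists f\in 2^\N)(Y(f,n)=0)$, the last equivalence because $H$ is a bijection onto $\mathcal C$ and $I$ its inverse. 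Hence the same $X$ witnesses the $\BOOT$-instance for $Y$, but only after we know $(\exists f^1)(Y(f,n)=0)\asa(\exists f\in 2^\N)(Y(f,n)=0)$ — which fails in general, since $\BOOT$ quantifies $f$ over all of $\N^\N$, not just $2^\N$.

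That mismatch is the main obstacle, and it is exactly the point the excerpt flags by remarking that Theorem~\ref{frigi} `generalises to principles that imply $\HBU$ \dots\ that boast a third-order functional to which the restriction can be naturally applied': the natural move is to first note that $\BOOT$ over $\RCAo+(\exists^2)$ is equivalent to its Cantor-space (equivalently, $\N^\N$-via-coding) form, using a pairing/unbounded-search coding of $\N^\N$-witnesses. More carefully, one uses the standard trick (available given $\exists^2$) of replacing the existential quantifier $(\exists f^1)$ by $(\exists g\in 2^\N)$ over codes of finite-approximations plus a convergence clause, or simply passes through the homeomorphism $\N^\N\cong$ (a $G_\delta$ subset of $2^\N$); either way one gets a $Y'^2$, primitive-recursive in $Y$ and $\exists^2$, with $(\exists f^1)(Y(f,n)=0)\asa(\exists h\in 2^\N)(Y'(h,n)=0)$ for all $n$. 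Feeding $Y'$ (rather than $Y$) into the construction of $Z$ above then closes the argument. I would present this as: first prove $\BOOT\asa\BOOT_{\cc}$ over $\RCAo$ (the $2^\N$-restricted bootstrap), analogously to $\HBU\asa\HBU_{\cc}$; then run the $\mathcal C$-transfer to get $\BOOT_{\cc}\di\BOOT_{\ae}$, with $\BOOT_{\ae}\di\BOOT_{\cc}$ being a trivial restriction. The remaining details — that $Z$ really is continuous off $\mathcal C$, that $\mathcal C$ has measure zero and is RM-closed (from \cite{simpson2}*{IV.1.2}), and that all the case distinctions on $x\in\mathcal C$ are legitimate given $\exists^2$ — are routine and parallel to Theorem~\ref{frigi}.
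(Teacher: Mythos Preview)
Your proposal is correct and follows essentially the same route as the paper: split on $(\exists^{2})\vee\neg(\exists^{2})$, and under $(\exists^{2})$ reduce $\BOOT$ to its $2^{\N}$-form via the graph-coding of $f^{1}$ as a binary sequence (the paper invokes $\mu^{2}$ directly rather than naming an intermediate $\BOOT_{\cc}$), then transfer through the Cantor set using exactly your $Z(x,n)$. The only cosmetic difference is that the paper absorbs the $\N^{\N}\to 2^{\N}$ reduction into a single ``without loss of generality'' clause instead of isolating it as a separate equivalence.
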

\begin{proof}
In case $\neg(\exists^{2})$, all functions on $\R$ are continuous by \cite{kohlenbach2}*{Prop.\ 3.12}; in this case, the equivalence is trivial.  
In case $(\exists^{2})$, the forward direction is immediate, modulo coding real numbers given $\exists^{2}$.  For the reverse direction, fix $Y^{2}$ and note that we may restrict the quantifier $(\exists f^{1})$ in $\BOOT$ to $2^{\N}$ without loss of generality.  Indeed, $\mu^{2}$ allows us to represent $f^{1}$ via its graph, a subset of $\N^{2}$, which can be coded as a binary sequence.   
Now define
\be\label{lok}
Z(x, n):=
\begin{cases}
0 & x\in \mathcal{C} \wedge Y(I(x), n)=0  \\
1  & \textup{ otherwise }
\end{cases}, 
\ee
where $\mathcal{C}$ and $I$ are as in the theorem.  
Note that $\lambda x.Z(x, n)$ is continuous outside of $\mathcal{C}$. 
By $\BOOT_{\ae}$, there is $X\subset \N$ such that for all $n\in \N$, we have:
\[
n\in X\asa (\exists x\in [0,1])(Z(x, n)=0)\asa (\exists f\in 2^{\N})(Y(f, n)=0),
\]
where the last equivalence is by the definition of $Z$ in \eqref{lok}.  
\qed\end{proof}
Next, we show that the \emph{Vitali covering theorem} as in $\WHBU$ behaves quite differently from the Heine-Borel theorem as in $\HBU$.  
Recall that the Heine-Borel theorem applies to open coverings of compact sets, while the Vitali covering theorem applies to Vitali coverings\footnote{An open covering $V$ is a \emph{Vitali covering} of $E$ if any point of $E$ can be covered by some open in $V$ with arbitrary small (Lebesgue) measure.} of any set $E$ of finite (Lebesgue) measure.  
The former provides a finite sub-covering while the latter provides a sequence that covers $E$ up to a set of measure zero.  As argued in \cite{dagsamVI}, $\WHBU$ is the combinatorial essence of Vitali's covering theorem.  

\smallskip

Now, let $\WHBU_{\ae}$ be $\WHBU$ restricted to functions continuous almost everywhere, as in Definition~\ref{clier}; recall that $\Z_{2}^{\omega}$ cannot prove $\WHBU$. 
\begin{thm}
The system $\RCAo+\WKL$ proves $\WHBU_{\ae}$.
\end{thm}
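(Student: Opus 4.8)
The plan is to transfer the Heine--Borel covering lemma for Cantor space, which is available in $\RCAo+\WKL$, across the canonical surjection $2^{\N}\di[0,1]$, exploiting that $\Psi$ is well-behaved off a small set. Fix $\Psi:[0,1]\di\R^{+}$ continuous outside the RM-closed measure-zero set $E\subset[0,1]$, and fix $\eps>_{\R}0$. By the definition of `measure zero' I would first fix a sequence $(I_{n})_{n\in\N}$ of basic open intervals with $E\subseteq U:=\bigcup_{n}I_{n}$ and $\sum_{n}|I_{n}|<\eps/2$. Let $\phi:2^{\N}\di[0,1]$ be $\phi(f):=\sum_{n}f(n)2^{-(n+1)}$; this is continuous with $|\phi(h)-\phi(f)|\leq 2^{-N}$ whenever $h=_{N}f$, it is onto $[0,1]$, and for a finite binary string $\sigma$ we have $\phi([\sigma])=[a_{\sigma},a_{\sigma}+2^{-|\sigma|}]$ with rational $a_{\sigma}:=\sum_{i<|\sigma|}\sigma(i)2^{-(i+1)}$.

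Next I would set up a bar on $2^{\N}$. Call $\sigma$ \emph{good} if either (i) $\phi([\sigma])$ lies in a finite union $I_{n_{0}}\cup\dots\cup I_{n_{j}}$, or (ii) there is a rational $y\in[0,1]$ with $\phi([\sigma])\subseteq I_{y}^{\Psi}$. The key point is that goodness is $\Sigma_{1}^{0}$: clause (i) is decidable in $(I_{n})_{n}$, and for a fixed rational $y$ the inclusion in clause (ii) is the conjunction of the $\Sigma_{1}^{0}$ inequalities $\Psi(y)>_{\R}y-a_{\sigma}$ and $\Psi(y)>_{\R}a_{\sigma}-y+2^{-|\sigma|}$. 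I claim goodness is a bar. Given $f\in2^{\N}$: if $\phi(f)\in U$ then $\phi(f)$ lies in some open $I_{n_{0}}$, so $\phi([\overline{f}N])\subseteq[\phi(f)-2^{-N},\phi(f)+2^{-N}]\subseteq I_{n_{0}}$ once $2^{-N}$ is below the distance from $\phi(f)$ to the endpoints of $I_{n_{0}}$, and then $\overline{f}N$ is good via (i); if $\phi(f)\notin U$ then $\phi(f)\notin E$, so $\Psi$ is continuous at $\phi(f)$, and picking a rational $y\in[0,1]$ close enough to $\phi(f)$ one verifies, using this continuity together with $\Psi(\phi(f))>_{\R}0$, that $\phi([\overline{f}N])\subseteq I_{y}^{\Psi}$ for all large $N$, so $\overline{f}N$ is good via (ii). Declaring $\sigma$ \emph{secured} if some initial segment of $\sigma$ is good, the set of secured strings is still $\Sigma_{1}^{0}$ and closed under extension, so $\{[\sigma]:\sigma\text{ secured}\}$ is a covering of $2^{\N}$ by a sequence of basic clopen sets.

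Now I would invoke the Heine--Borel covering lemma for $2^{\N}$, provable in $\RCAo+\WKL$ by \cite{simpson2}*{IV.1}, to obtain finitely many secured strings $\sigma^{(0)},\dots,\sigma^{(r)}$ with $\bigcup_{l\leq r}[\sigma^{(l)}]=2^{\N}$. Reading off, for each $l\leq r$, a good initial segment of $\sigma^{(l)}$ together with its clause-(i) or clause-(ii) witness produces a finite set $F\subseteq\N$ and finitely many rationals $y_{0},\dots,y_{m}\in[0,1]$ with
\[
[0,1]=\textstyle\bigcup_{l\leq r}\phi([\sigma^{(l)}])\subseteq\big(\bigcup_{n\in F}I_{n}\big)\cup\big(\bigcup_{i\leq m}I_{y_{i}}^{\Psi}\big).
\]
After deleting repetitions among the $y_{i}$, the part of $[0,1]$ missed by $\bigcup_{i\leq m}I_{y_{i}}^{\Psi}$ is contained in $\bigcup_{n\in F}I_{n}$, which has length $\leq\sum_{n}|I_{n}|<\eps/2$; by finite subadditivity of outer measure this gives $|\bigcup_{i\leq m}I_{y_{i}}^{\Psi}|>1-\eps/2$, and since $\sum_{i\leq m}|J_{y_{i}}^{\Psi}|$ is exactly the length of $\bigcup_{i\leq m}I_{y_{i}}^{\Psi}$ (this is how $\WHBU$ encodes that quantity), we obtain $1-\eps<_{\R}\sum_{i\leq m}|J_{y_{i}}^{\Psi}|$, i.e.\ the conclusion of $\WHBU_{\ae}$ for $\Psi$ and $\eps$.

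The hard part, and the reason for the precise shape of the construction, is keeping the covering of $2^{\N}$ that is fed to the Heine--Borel lemma \emph{genuinely} of $\Sigma_{1}^{0}$ (open) complexity although $\Psi$ is an arbitrary, possibly badly discontinuous oracle: this forces us to query $\Psi$ only at rational points and to phrase every inclusion $\phi([\sigma])\subseteq I_{y}^{\Psi}$ through inequalities $\Psi(y)>_{\R}q$, with the pointwise continuity of $\Psi$ off $E$ entering only to certify that these $\Sigma_{1}^{0}$ clauses already bar $2^{\N}$. I note in passing that, unlike in the proof of Theorem~\ref{frigi}, this argument needs no case split on $(\exists^{2})$, and the only measure-theoretic input is the elementary $\RCAo$-fact that $[0,1]$ is not covered by finitely many intervals of total length below $1$.
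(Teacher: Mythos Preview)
Your argument is correct and follows the same strategy as the paper: cover $[0,1]$ by the $\eps/2$-small exceptional intervals $I_{n}$ together with balls $I_{y}^{\Psi}$ centred at rationals, use $\WKL$-compactness to extract a finite subcover, and read off the measure estimate. The only difference is cosmetic: you pull the compactness step back to $2^{\N}$ via the binary surjection and run an explicit $\Sigma_{1}^{0}$-bar argument there, whereas the paper stays on $[0,1]$ and directly invokes the countable Heine--Borel theorem (equivalent to $\WKL$ by \cite{simpson2}*{IV.1}) for the cover $\cup_{q\in\Q\setminus E}B(q,\Psi(q))\cup\cup_{n}I_{n}$, which makes for a shorter proof.
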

\begin{proof}
Let $\Psi:[0,1]\di \R^{+}$ be continuous on $[0,1]\setminus E$ with $E\subset [0,1]$ of measure zero and RM-closed.  Fix $\eps>0$ and let $\cup_{n\in \N}I_{n}$ be a union of basic open intervals covering $E$ and with measure at most $\eps/2$.
Then $[0,1]$ is covered by:  
\be\label{blaffer}
\cup_{q\in \Q\setminus E}B(q, \Psi(q))\bigcup \cup_{n\in \N}I_{n}.
\ee 
Indeed, that the covering in \eqref{blaffer} covers $E$ is trivial, while $[0,1]\setminus E$ is (RM)-open.  Hence, $x_{0} \in[0,1]\setminus E$ implies that $B(x_{0}, r)\subset [0,1]\setminus E$ for $r>0$ small enough and for $q\in \Q\cap [0,1] $ close enough to $x_{0}$, we have $x_{0}\in B(q, \Psi(q))$.
By \cite{simpson2}*{IV.1}, $\WKL$ is equivalent to the countable Heine-Borel theorem.
Hence, there are $q_{0}, \dots, q_{k}\in \Q\setminus E$ and $n_{0}\in \N$
such that the finite union $\cup_{i=1}^{k}B(q_{i}, \Psi(q_{i}))\bigcup\cup_{j=0}^{n_{0}}I_{j}$ covers $[0,1]$. 
Since the measure of $\cup_{j=0}^{n_{0}}I_{j}$ is at most $\eps/2$, the measure of $\cup_{i=1}^{k}B(q_{i}, \Psi(q_{i}))$ is at least $1-\eps/2$, as required by $\WHBU_{\ae}$. 
\qed \end{proof}
\begin{cor}
The system $\RCAo$ proves $\WWKL\asa \WHBU_{\ae}$.
\end{cor}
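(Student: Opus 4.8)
The plan is to prove the corollary $\RCAo \vdash \WWKL \asa \WHBU_{\ae}$ by establishing the two directions separately, noting that the forward direction has essentially already been done in the preceding theorem. First I would observe that the reverse implication of the preceding theorem gives $\RCAo + \WKL \vdash \WHBU_{\ae}$; since $\WKL$ implies $\WWKL$ trivially, one cannot directly conclude $\WWKL \vdash \WHBU_{\ae}$ from that theorem alone. So the real content of the forward direction is to re-run the proof of the preceding theorem but replacing the appeal to $\WKL$ (the countable Heine--Borel theorem) with an appeal to $\WWKL$ — which is exactly strong enough, because we only need a finite sub-union whose \emph{measure} exceeds $1 - \eps$, not one that literally covers $[0,1]$. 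In more detail: given $\Psi$ continuous outside an RM-closed measure-zero set $E$, and given $\eps > 0$, cover $E$ by basic open intervals $\cup_{n} I_n$ of total measure at most $\eps/2$; then $\cup_{q \in \Q \setminus E} B(q, \Psi(q)) \cup \cup_n I_n$ covers $[0,1]$ as in \eqref{blaffer}. Now $\WWKL$ is equivalent over $\RCAo$ to the statement that every such covering of $[0,1]$ admits a finite sub-collection of measure arbitrarily close to $1$ (this is the measure-theoretic form of $\WWKL$, essentially \cite{simpson2}*{X.1}); extracting such a finite sub-collection of measure at least $1 - \eps/2$ and discarding the $I_n$-part (which contributes at most $\eps/2$) leaves finitely many balls $B(q_i, \Psi(q_i))$ of total measure at least $1 - \eps$, which is precisely $\WHBU_{\ae}$ after passing to pairwise-disjointified pieces $J^{\Psi}_{y_i}$ as in the definition of $\WHBU$.

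For the reverse implication $\WHBU_{\ae} \di \WWKL$, I would argue via the contrapositive characterisation of $\WWKL$ in terms of positive-measure closed subsets of Cantor space, or more directly by producing, from a bad $\WWKL$-instance, a bad $\WHBU_{\ae}$-instance. The cleanest route: $\WWKL$ is equivalent over $\RCAo$ to $\WHBU$ restricted to \emph{continuous} $\Psi$ (the $\ECF$-image remark and the discussion around $\WHBU$ vs.\ $\WWKL$ in the excerpt make this plausible, and it is standard). But every continuous $\Psi:[0,1]\di\R^+$ is in particular continuous almost everywhere (it is continuous outside the empty set, which is RM-closed of measure zero), so $\WHBU_{\ae}$ applied to continuous $\Psi$ yields exactly that continuous instance of $\WHBU$, hence $\WWKL$. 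So the reverse direction reduces to the known fact $\WWKL \asa \WHBU|_{\text{continuous}}$, combined with the trivial inclusion of continuous functions among the continuous-a.e.\ functions.

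The main obstacle, I expect, is not conceptual but bookkeeping: one must make sure that the measure-theoretic reformulation of $\WWKL$ being invoked is the one actually available in $\RCAo$, and that the finite sub-covering extracted respects the pairwise-disjoint $J^{\Psi}_{y_i}$ formulation demanded by $\WHBU$ rather than the raw overlapping-intervals formulation. This is handled by the standard observation that for any finite list of open intervals, $\sum_i |J^{\Psi}_{y_i}| = |\cup_i I^{\Psi}_{y_i}|$, so a lower bound on the measure of the union transfers verbatim to a lower bound on $\sum_i |J^{\Psi}_{y_i}|$. A second minor point is the case split on $(\exists^2)$: as in the preceding theorems, if $\neg(\exists^2)$ then all functions are continuous and $\WHBU_{\ae}$ collapses to $\WHBU$ for continuous functions, which is again $\WWKL$, so both directions are immediate there; under $(\exists^2)$ one runs the arguments above, with real-number coding available. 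With these caveats addressed, the corollary follows, and indeed it is essentially an immediate consequence of the preceding theorem together with the well-known $\WWKL \asa \WHBU|_{\text{continuous}}$ equivalence.
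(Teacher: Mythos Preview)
Your proposal is correct and close in spirit to the paper's (very terse) proof, though the two differ in which $\WWKL$-equivalent from \cite{simpson2}*{X.1} is invoked. For the forward direction, the paper converts the cover \eqref{blaffer} into a Vitali cover and applies \cite{simpson2}*{X.1.13}, whereas you appeal directly to the ``large-measure finite sub-collection'' form of $\WWKL$ on \eqref{blaffer}; your route is slightly more direct and avoids the Vitali detour, at the cost of relying on an equivalent that you assert rather than pinpoint in Simpson. For the reverse direction, the paper simply gestures at the Vitali covering theorem for \emph{sequences} of intervals as an equivalent of $\WWKL$; your argument via ``continuous $\subseteq$ continuous a.e.'' together with $[\WHBU]_{\ECF}=\WWKL$ is a cleaner way to make the same reduction explicit (and, as you note, the intermediate step really is that $\WHBU$ restricted to RM-coded continuous $\Psi$ is literally the second-order statement equivalent to $\WWKL$). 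In both directions the substance is the same---everything reduces to known second-order equivalents of $\WWKL$---so the difference is packaging rather than content.
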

\begin{proof}
The reverse implication is immediate in light of the RM of $\WWKL$ in \cite{simpson2}*{X.1}, which involves the Vitali covering theorem for countable coverings (given by a sequence).
For the forward implication, convert the cover from \eqref{blaffer} to a Vitali cover and use \cite{simpson2}*{X.1.13}. 
\qed \end{proof}
Finally, recall Remark \ref{fundies} discussing the foundational significance of the above.

\subsection{Splittings for the Heine-Borel theorem}\label{X}
We establish a splitting for $\HBU$ as in Theorem \ref{forgo} based on known principles formulated with \emph{countable sets} as in Definition \ref{standard}.  
As will become clear, there is also some historical interest in this study.

\smallskip

First of all, the following principle $\HBC_{0}$ is studied in \cite{dagsamX}*{\S3}, while the (historical and foundational) significance of this principle is discussed in Remark~\ref{fundies}.
The aforementioned system $\Z_{2}^{\omega}$ cannot prove $\HBC_{0}$.
\begin{princ}[$\HBC_{0}$]
For countable $A\subset \R^{2}$ with $(\forall x\in [0,1])(\exists (a, b)\in A)(x\in (a, b))$, there are $(a_{0}, b_{0}), \dots, (a_{k}, b_{k})\in A$ with $(\forall x\in [0,1])(\exists i\leq k)(x\in (a_{i},b_{i} ))$.
\end{princ}
%
%
Secondly, the second-order Vitali covering theorem has a number of equivalent formulations (see \cite{simpson2}*{X.1}), including the statement \emph{a countable covering of $[0,1]$ has a sub-collection with measure zero complement}.
Intuitively speaking, the following principle $\WHBU^{+}$ strengthens `measure zero' to `countable'.  Alternatively, $\WHBU^{+}$ can be viewed as a weakening of the Lindel\"of lemma, introduced in \cite{blindeloef} and studied in higher-order RM in \cites{dagsamIII, dagsamV}.   
\begin{princ}[$\WHBU^{+}$]
For $\Psi:[0,1]\di \R^{+}$, there is a sequence $(y_{n})_{n\in \N}$ in $[0,1]$ such that $[0,1]\setminus \cup_{n\in \N}I_{y_{n}}^{\Psi}$ is countable.
\end{princ}
Note that $\WHBU^{+}+\HBC_{0}$ yields a conservative\footnote{The system $\RCAo+\neg(\exists^{2})$ is an $\L_{2}$-conservative extension of $\RCAo$ and the former readily proves $\WHBU^{+}+\HBC_{0}$.  By constrast $\HBU\di \WKL$ over $\RCAo$.} extension of $\RCAo$, i.e.\ the former cannot imply $\HBU$ without the presence of $\WKL$.  Other independence results are provided by Theorem \ref{indie}. 

\smallskip

We have the following theorem, where $\A_{0}$ was introduced in Section \ref{prelim2}.
This axiom can be avoided 
by enriching\footnote{In particular, one would add a function $G:[0,1]\di \R^{2}$ to the antecedent of $\HBC_{0}$ such that $G(x)\in A$ and $x\in \big(G(x)(1), G(x)(2)\big)$ for $x\in [0,1]$.  In this way, the covering is given by $\cup_{x\in [0,1]}(G(x)(1), G(x)(2))$.} the antecedent of $\HBC_{0}$.  
\begin{thm}\label{forgo}
The system $\RCAo+\A_{0}$ proves 
\be\label{fung}
[\WHBU^{+}+\HBC_{0}+\WKL]\asa \HBU,
\ee
where the axiom $\A_{0}$ is only needed for $\HBU\di \HBC_{0}$.
\end{thm}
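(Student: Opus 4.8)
The plan is to prove the equivalence \eqref{fung} by establishing the two directions separately, with the forward direction $[\WHBU^{+}+\HBC_{0}+\WKL]\di \HBU$ being the conceptually central one and the reverse direction $\HBU\di [\WHBU^{+}+\HBC_{0}+\WKL]$ (which needs $\A_{0}$) being a matter of assembling known implications. As in the proof of Theorem~\ref{frigi}, I would first dispose of the case $\neg(\exists^{2})$: then all $\R\di\R$ functions are continuous by \cite{kohlenbach2}*{Prop.\ 3.12}, so every canonical cover reduces to an RM-open cover and $\HBU$ follows already from $\WKL$ via the countable Heine-Borel theorem \cite{simpson2}*{IV.1}; moreover $\WHBU^{+}$ and $\HBC_{0}$ are then readily provable, so the equivalence is trivial. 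Since $\RCAo$ is classical, it remains to argue under $(\exists^{2})$, which in particular lets us decide real-number (in)equalities and membership in countable sets.

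For the forward direction, fix $\Psi:[0,1]\di\R^{+}$ and apply $\WHBU^{+}$ to obtain a sequence $(y_{n})_{n\in\N}$ in $[0,1]$ such that the leftover set $E:=[0,1]\setminus\cup_{n\in\N}I_{y_{n}}^{\Psi}$ is countable, say via an injection $Y:E\di\N$. The idea is that $\cup_{n}I_{y_{n}}^{\Psi}$ is an RM-open cover of $[0,1]\setminus E$, so $\WKL$ (in the guise of the countable Heine-Borel theorem, \cite{simpson2}*{IV.1}) would give a finite subcover \emph{if} $E$ were also covered finitely; so the remaining work is to cover the countable set $E$ by finitely many of the intervals $I_{x}^{\Psi}$. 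Here is where $\HBC_{0}$ enters: each point $x\in E$ lies in its own interval $I_{x}^{\Psi}=(x-\Psi(x),x+\Psi(x))$, and using $(\exists^{2})$ together with the counting function $Y$ one can form the \emph{countable} set $A:=\{\,(x-\Psi(x),\,x+\Psi(x)) : x\in E\,\}\subset\R^{2}$ — countability of $A$ follows by composing $Y$ with a decoding, since distinct endpoints pair injectively; one should double-check that $\lambda p.\,(\text{is }p\text{ of this form})$ is decidable given $\exists^{2}$ and the data for $E$. Now $A$ is a countable collection of basic open intervals covering the (sub)set $E$; to invoke $\HBC_{0}$ — which is stated for coverings of all of $[0,1]$ — one throws the intervals $I_{y_{n}}^{\Psi}$ back in (they are also indexed by a countable set) to get a countable cover $A'$ of $[0,1]$, applies $\HBC_{0}$ to extract a finite subcover, and reads off finitely many centres $z_{0},\dots,z_{m}\in[0,1]$ with $\cup_{j\le m}I_{z_{j}}^{\Psi}\supseteq[0,1]$, which is exactly $\HBU$.

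The reverse direction is an assembly of facts already recorded in the excerpt and its references. From $\HBU$ one gets $\WKL$ immediately (the footnote after the statement of $\WHBU^{+}$ notes $\HBU\di\WKL$ over $\RCAo$, and this is also implicit in the $\ECF$-remark and in \cite{dagsamV,dagsamVI}). For $\HBU\di\WHBU^{+}$: given $\Psi$, $\HBU$ yields a \emph{finite} subcover, whose centres in particular form a (finite, hence countable) sequence $(y_{n})_{n}$ with $[0,1]\setminus\cup_{n}I_{y_{n}}^{\Psi}=\emptyset$, which is trivially countable — so $\WHBU^{+}$ is actually an immediate weakening of $\HBU$. The one genuinely non-trivial implication is $\HBU\di\HBC_{0}$, and this is precisely where $\A_{0}$ is invoked: given a countable $A\subset\R^{2}$ covering $[0,1]$ via its counting function, one uses $\A_{0}$ — a fragment of the neighbourhood function principle $\NFP$, as in \cites{samph,sahotop} — to produce from the pointwise statement ``every $x\in[0,1]$ lies in some $(a,b)\in A$'' a \emph{functional} selector, equivalently a gauge $\Psi:[0,1]\di\R^{+}$ with $I_{x}^{\Psi}$ contained in a member of $A$; then $\HBU$ applied to $\cup_{x}I_{x}^{\Psi}$ gives finitely many points, and tracing each back through the selector yields finitely many members of $A$ covering $[0,1]$, i.e.\ $\HBC_{0}$. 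The main obstacle I anticipate is the bookkeeping in this last step — turning the countable-set data for $A$ into exactly the antecedent shape $A(\sigma)\equiv(\exists g\in 2^{\N})(Y(g,\sigma)=0)$ demanded by $\A_{0}$, and checking that the extracted $G^{2}$ really does deliver a positive-real-valued gauge rather than just a formal selector; this is routine but needs $(\exists^{2})$ throughout and careful use of the coding of reals by elements of $2^{\N}$ (as in the corollary to Theorem~\ref{frigi}, where $\mu^{2}$ is used to code $f^{1}$ by its graph).
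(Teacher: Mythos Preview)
Your proposal is correct and follows essentially the same architecture as the paper's proof: both dispose of the $\neg(\exists^{2})$ case first, prove the forward direction by combining the $\WHBU^{+}$-leftover set with the sequence $(y_{n})_{n}$ into a single countable collection of intervals and applying $\HBC_{0}$, and derive $\HBU\di\HBC_{0}$ by using $\A_{0}$ to extract a gauge $\Psi(x)=2^{-G(x)}$ from the covering data and then invoking $\HBU$. The decidability issue you flag is handled exactly as the paper does it: since $x$ is the midpoint of $I_{x}^{\Psi}$, membership of $(a,b)$ in $A$ reduces to checking $\frac{a+b}{2}\in E$ and $\Psi(\frac{a+b}{2})=\frac{b-a}{2}$, which is decidable with $\exists^{2}$, and the injection on $A'$ is then built from $Y$ on the $E$-part and a least-$n$ search (via $\mu^{2}$) on the sequence part.
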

\begin{proof}
First of all, in case $\neg(\exists^{2})$, all functions on $\R$ are continuous, rendering $\WHBU^{+}+\HBC_{0}$ trivial while $\HBU$ reduces to $\WKL$.
Hence, for the rest of the proof, we may assume $(\exists^{2})$, by the law of excluded middle as in $(\exists^{2})\vee \neg(\exists^{2})$.

\smallskip

For the reverse implication, assume $\A_{0}+\HBU$ and let $A$ be as in $\HBC_{0}$.  
The functional $\exists^{2}$ can uniformly convert real numbers to a binary representation.  
Hence \eqref{typical} is equivalent to a formula as in the antecedent of $\A_{0}$:
\be\label{typical}\textstyle
(\forall x\in [0,1])(\exists n\in \N)\big[ (\exists (a, b)\in A)( a< [x](n+1)-\frac{1}{2^{n}} \wedge  [x](n+1)+\frac{1}{2^{n}} <b  ) \big], 
\ee
where `$[x](n)$' is the $n$-th approximation of the real $x$, given as a fast-converging Cauchy sequence. 
Apply $\A_{0}$ to \eqref{typical} to obtain $G:[0,1]\di \N$ such that $G(x)=n$ as in \eqref{typical}.  Apply $\HBU$ to $\cup_{x\in [0,1]}I^{\Psi}_{x}$ for $\Psi(x):=\frac{1}{2^{G(x)}}$.
The finite sub-cover $y_{0}, \dots, y_{k}\in [0,1]$ provided by $\HBU$ gives rise to $(a_{i}, b_{i})\in A$ containing $I_{y_{i}}^{\Psi}$ for $i\leq k$ by the definition of $G$.  
Moreover, $\HBU$ implies $\WKL$ as the latter is equivalent to the `countable' Heine-Borel theorem as in \cite{simpson2}*{IV.1}.
Clearly, the empty set is countable by Definition \ref{standard} and $\HBU\di \WHBU^{+}$ is therefore trivial. 

\smallskip

For the forward implication, fix $\Psi:[0,1]\di \R^{+}$ and let $(y_{n})_{n\in \N}$ be as in $\WHBU^{+}$.  
Define `$x\in B$' as $x\in [0,1]\setminus \cup_{n\in \N}I_{y_{n}}^{\Psi}$ and note that when $B$ is empty, the theorem follows as $\WKL$ implies the second-order Heine-Borel theorem (\cite{simpson2}*{IV.1}).
Now assume $B\ne \emptyset$ and define $A$ as the set of $(a, b)$ such that either $(a, b)= I_{x}^{\Psi}$ for $x\in B$, or $(a, b)=I_{y_{n}}^{\Psi}$ for some $n\in \N$.
Note that in the first case, $(a, b)\in A$ if and only $\frac{a+b}{2}\in B$, i.e.\ defining $A$ does not require quantifying over $\R$.
Moreover, $A$ is countable because $B$ is: if $Y$ is injective on $B$, then $W$ defined as follows is injective on $A$:
\[   \textstyle
W\big( (a,b) \big):=
\begin{cases}
2Y(\frac{a+b}{2}) & \frac{a+b}{2}\in B \\
H((a, b)) & \textup{otherwise} 
\end{cases},
\]
where $H((a,b))$ is the least $n\in \N$ such that $(a, b)=I_{y_{n}}^{\Psi}$, if such there is, and zero otherwise. 
The intervals in the set $A$ cover $[0,1]$ as in the antecedent of $\HBC_{0}$, and the latter now implies $\HBU$.
\qed \end{proof}
The principles $\WHBU^{+}$ and $\HBC_{0}$ are `quite' independent by the following theorem, assuming the systems therein are consistent. 
\begin{thm}\label{indie}
The system $\Z_{2}^{\omega}+\QFAC^{0,1}+\WHBU^{+}$ cannot prove $\HBC_{0}$. \\
The system $\RCAo+\HBC_{0}+\WHBU^{+}$ cannot prove $\WKL_{0}$.
\end{thm}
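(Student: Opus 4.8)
The plan is to establish each non-implication by exhibiting a model of the hypothesised system in which the target principle fails, using the conservation facts already collected in the excerpt.

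For the first claim, that $\Z_{2}^{\omega}+\QFAC^{0,1}+\WHBU^{+}$ cannot prove $\HBC_{0}$, the strategy is to invoke the fact (recalled just before Theorem \ref{forgo}) that $\Z_{2}^{\omega}$ cannot prove $\HBC_{0}$, together with the observation that $\WHBU^{+}$ is harmless relative to $\Z_{2}^{\omega}$. Concretely, I would argue that $\Z_{2}^{\omega}$ already proves $\WHBU^{+}$: given $\Psi:[0,1]\di\R^{+}$, the third-order comprehension available in $\Z_{2}^{\omega}$ suffices to produce, for each $x\in[0,1]$, the canonical interval $I_{x}^{\Psi}$, and the Lindel\"of-type argument — enumerate a countable subcover of $\cup_{x\in[0,1]}I_{x}^{\Psi}$ using rational centres together with enough witnesses — goes through, noting that actually the complement of such a countable subcover can be taken \emph{empty}, so $\WHBU^{+}$ holds trivially in any system proving the Lindel\"of lemma for canonical covers (which $\Z_{2}^{\omega}$ does, cf.\ \cites{dagsamIII,dagsamV}). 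Hence $\Z_{2}^{\omega}+\QFAC^{0,1}+\WHBU^{+}$ is no stronger than $\Z_{2}^{\omega}+\QFAC^{0,1}$ with respect to $\HBC_{0}$, and since the latter cannot prove $\HBC_{0}$ (by \cite{dagsamX}, as $\QFAC^{0,1}$ is a mild choice fragment conservative in the relevant sense), we are done. The one point to verify carefully is that adding $\QFAC^{0,1}$ does not accidentally yield $\HBC_{0}$; this should follow because the independence of $\HBC_{0}$ in \cite{dagsamX} is witnessed by a model (e.g.\ a model based on the hereditarily countable sets, or an $\omega$-model of $\Z_2$ lifted via Hunter's methods) in which $\QFAC^{0,1}$ also holds.

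For the second claim, that $\RCAo+\HBC_{0}+\WHBU^{+}$ cannot prove $\WKL_{0}$, the plan is cleaner: use the conservation fact stated in the footnote before Theorem \ref{forgo}, namely that $\RCAo+\neg(\exists^{2})$ readily proves $\WHBU^{+}+\HBC_{0}$ and is an $\L_{2}$-conservative extension of $\RCAo$. Working in $\RCAo+\neg(\exists^{2})$: since $\neg(\exists^{2})$ forces every $\R\di\R$ function to be continuous (by \cite{kohlenbach2}*{Prop.\ 3.12}), every $\Psi:[0,1]\di\R^{+}$ is continuous, so $\HBC_{0}$ is about countable collections of intervals which — being images of continuous data — reduce to the second-order situation and follow from $\RCAo$ alone (indeed the collection is effectively enumerable), and likewise $\WHBU^{+}$ reduces to a triviality. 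Therefore $\RCAo+\HBC_{0}+\WHBU^{+}$ is $\L_{2}$-conservative over $\RCAo$, and since $\RCAo$ does not prove $\WKL_{0}$ (standard, by the existence of $\omega$-models of $\RCA_0$ with no complete consistent extension of, say, a suitable theory, i.e.\ the low basis / REC model), neither does $\RCAo+\HBC_{0}+\WHBU^{+}$.

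The main obstacle I anticipate is the first part: making rigorous the claim that $\Z_{2}^{\omega}$ (or at least the model witnessing failure of $\HBC_{0}$ there) validates $\WHBU^{+}$ while still refuting $\HBC_{0}$, and simultaneously carrying $\QFAC^{0,1}$. This requires either (a) a direct proof that $\Z_{2}^{\omega}$ proves $\WHBU^{+}$, which hinges on whether the Lindel\"of lemma for canonical covers is available in $\Z_{2}^{\omega}$ (it is, per \cites{dagsamIII,dagsamV}, but one should cite the exact statement), or (b) re-examining the specific model used in \cite{dagsamX} to refute $\HBC_{0}$ and checking $\WHBU^{+}$ and $\QFAC^{0,1}$ hold there. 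Option (a) is the more robust route and is the one I would write up, with a remark that the empty-complement version of $\WHBU^{+}$ is what actually gets proved. The second part is essentially bookkeeping with conservation results already in hand.
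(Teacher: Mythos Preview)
Your second part is essentially the paper's argument: the paper invokes the $\ECF$-translation, which is the syntactic counterpart of your model-theoretic use of $\RCAo+\neg(\exists^{2})$, so that part is fine.

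Your first part has a genuine gap. You assert that $\Z_{2}^{\omega}$ proves the Lindel\"of lemma for canonical covers, citing \cites{dagsamIII,dagsamV}, and hence proves $\WHBU^{+}$ outright. But those references establish the \emph{opposite}: the Lindel\"of lemma for canonical covers is at least as hard as $\HBU$ (over $\ACAo$), and $\Z_{2}^{\omega}$ cannot prove $\HBU$. So there is no reason to believe $\Z_{2}^{\omega}$ proves $\WHBU^{+}$, and your reduction to ``$\Z_{2}^{\omega}+\QFAC^{0,1}$ does not prove $\HBC_{0}$'' does not go through. Your fallback option (b), checking a specific model, is not developed.

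The paper's argument avoids this entirely via a neat trick with $\NIN$. One observes that $\neg\NIN$ (the existence of an injection $[0,1]\to\N$) \emph{trivially} implies $\WHBU^{+}$, since then every subset of $[0,1]$ is countable. Now suppose $\Z_{2}^{\omega}+\QFAC^{0,1}+\WHBU^{+}\vdash\HBC_{0}$. Since $\HBC_{0}\to\NIN$ (by \cite{dagsamX}), and since $\neg\NIN\to\WHBU^{+}$, the system $\Z_{2}^{\omega}+\QFAC^{0,1}+\neg\NIN$ would prove both $\NIN$ and $\neg\NIN$, i.e.\ $\Z_{2}^{\omega}+\QFAC^{0,1}\vdash\NIN$. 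This contradicts \cite{dagsamX}*{Theorem 3.1}. The key insight you are missing is that one does not need $\WHBU^{+}$ to be provable in $\Z_{2}^{\omega}$; one only needs it to follow from the \emph{extra} hypothesis $\neg\NIN$, which is consistent with $\Z_{2}^{\omega}+\QFAC^{0,1}$.
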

\begin{proof}
For the first part, suppose $\Z_{2}^{\omega}+\QFAC^{0,1}+\WHBU^{+}$ does prove $\HBC_{0}$.  The latter implies $\NIN$ as follows by \cite{dagsamX}*{Cor.\ 3.2}:
\be\tag{$\NIN$}
(\forall Y:[0,1]\di \N)(\exists x, y\in [0,1])(Y(x)=Y(y)\wedge x\ne_{\R}y).
\ee
Clearly, $\neg\NIN$ implies $\WHBU^{+}$, and we obtain that $\Z_{2}^{\omega}+\QFAC^{0,1}+\neg\NIN$ proves a contradiction, namely $\WHBU^{+}$ and its negation.
Hence, $\Z_{2}^{\omega}+\QFAC^{0,1}$ proves $\NIN$, a contradiction by \cite{dagsamX}*{Theorem 3.1}, and the first part follows.

\smallskip

For the second part, the $\ECF$-translation (see Remark \ref{ECF}) converts $\HBC_{0}+\WHBU^{+}$ into a triviality.
\qed \end{proof}
Finally, we discuss similar results as follows.  Of course, the proof of Theorem~\ref{forgo} goes through \emph{mutatis mutandis} for $\WHBU^{+}+\HBC_{0}$ formulated using \emph{strongly} countable sets.   
Moreover, \eqref{toch} can be proved in the same way as \eqref{fung}, assuming additional countable choice as in $\QFAC^{0,1}$:
\be\label{toch}
\WHBU \asa [\WHBU^{+}+\WHBC_{0}+\WWKL],
\ee
where $\WHBC_{0}$ is $\HBC_{0}$ with the conclusion weakened to the existence of a sequence $(a_{n}, b_{n})_{n\in \N}$ of intervals in $A$ with measure at least one. 
Also, if we generalise $\HBU$ to coverings of any \emph{separably closed} set in $[0,1]$, the resulting version of \eqref{fung} involves $\ACA_{0}$ rather than $\WKL_{0}$ in light of \cite{hirstrm2001}*{Theorem 2}.

%
%
%
%

%
\section{References}
\begin{biblist}


\bib{opborrelen2}{book}{
  author={Borel, E.},
  title={Le\c {c}ons sur la th\'eorie des fonctions},
  year={1898},
  publisher={Gauthier-Villars, Paris},
  pages={pp.\ 136},
}

\bib{boekskeopendoen}{book}{
   author={Buchholz, Wilfried},
   author={Feferman, Solomon},
   author={Pohlers, Wolfram},
   author={Sieg, Wilfried},
   title={Iterated inductive definitions and subsystems of analysis},
   series={LNM 897},
   publisher={Springer},
   date={1981},
   pages={v+383},
}

\bib{cousin1}{article}{
  author={Cousin, P.},
  title={Sur les fonctions de $n$ variables complexes},
  journal={Acta Math.},
  volume={19},
  date={1895},
  pages={1--61},
}

\bib{littlefef}{book}{
  author={Feferman, Solomon},
  title={How a Little Bit goes a Long Way: Predicative Foundations of Analysis},
  year={2013},
  note={unpublished notes from 1977-1981 with updated introduction, \url {https://math.stanford.edu/~feferman/papers/pfa.pdf}},
}

\bib{fried}{article}{
  author={Friedman, Harvey},
  title={Some systems of second order arithmetic and their use},
  conference={ title={Proceedings of the ICM (Vancouver, B.\ C., 1974), Vol.\ 1}, },
  book={ },
  date={1975},
  pages={235--242},
}

\bib{fried2}{article}{
  author={Friedman, Harvey},
  title={ Systems of second order arithmetic with restricted induction, I \& II \(Abstracts\) },
  journal={Journal of Symbolic Logic},
  volume={41},
  date={1976},
  pages={557--559},
}

\bib{withgusto}{article}{
  author={Giusto, Mariagnese},
  author={Simpson, Stephen G.},
  title={Located sets and reverse mathematics},
  journal={J. Symbolic Logic},
  volume={65},
  date={2000},
  number={3},
  pages={1451--1480},
}

\bib{hirstrm2001}{article}{
  author={Hirst, Jeffry L.},
  title={A note on compactness of countable sets},
  conference={ title={Reverse mathematics 2001}, },
  book={ series={Lect. Notes Log.}, volume={21}, publisher={Assoc. Symbol. Logic}, },
  date={2005},
  pages={219--221},
}

\bib{hunterphd}{book}{
  author={Hunter, James},
  title={Higher-order reverse topology},
  note={Thesis (Ph.D.)--The University of Wisconsin - Madison},
  publisher={ProQuest LLC, Ann Arbor, MI},
  date={2008},
  pages={97},
}

\bib{kohlenbach2}{article}{
  author={Kohlenbach, Ulrich},
  title={Higher order reverse mathematics},
  conference={ title={Reverse mathematics 2001}, },
  book={ series={Lect. Notes Log.}, volume={21}, publisher={ASL}, },
  date={2005},
  pages={281--295},
}


\bib{kunen}{book}{
  author={Kunen, Kenneth},
  title={Set theory},
  series={Studies in Logic},
  volume={34},
  publisher={College Publications, London},
  date={2011},
  pages={viii+401},
}

\bib{blindeloef}{article}{
  author={Lindel\"of, Ernst},
  title={Sur Quelques Points De La Th\'eorie Des Ensembles},
  journal={Comptes Rendus},
  date={1903},
  pages={697--700},
}


\bib{dagsamIII}{article}{
  author={Normann, Dag},
  author={Sanders, Sam},
  title={On the mathematical and foundational significance of the uncountable},
  journal={Journal of Mathematical Logic, \url {https://doi.org/10.1142/S0219061319500016}},
  date={2019},
}

\bib{dagsamVI}{article}{
  author={Normann, Dag},
  author={Sanders, Sam},
  title={Representations in measure theory},
  journal={Submitted, arXiv: \url {https://arxiv.org/abs/1902.02756}},
  date={2019},
}

\bib{dagsamVII}{article}{
  author={Normann, Dag},
  author={Sanders, Sam},
  title={Open sets in Reverse Mathematics and Computability Theory},
  journal={Journal of Logic and Computability},
  volume={30},
  number={8},
  date={2020},
  pages={pp.\ 40},
}

\bib{dagsamV}{article}{
  author={Normann, Dag},
  author={Sanders, Sam},
  title={Pincherle's theorem in reverse mathematics and computability theory},
  journal={Ann. Pure Appl. Logic},
  volume={171},
  date={2020},
  number={5},
  pages={102788, 41},
}

\bib{dagsamX}{article}{
  author={Normann, Dag},
  author={Sanders, Sam},
  title={On the uncountability of $\mathbb {R}$},
  journal={Submitted, arxiv: \url {https://arxiv.org/abs/2007.07560}},
  pages={pp.\ 37},
  date={2020},
}

\bib{dagsamIX}{article}{
   author={Normann, Dag},
   author={Sanders, Sam},
   title={The Axiom of Choice in Computability Theory and Reverse Mathematics},
   journal={Journal of logic and computation},
   volume={31},
   number={1},
   pages={297--325},
   date={2021},
}

\bib{dagsamXI}{article}{
   author={Normann, Dag},
   author={Sanders, Sam},
   title={On robust theorems due to Bolzano, Weierstrass, and Cantor in Reverse Mathematics},
   journal={See \url{https://arxiv.org/abs/2102.04787}},
   pages={pp.\ 30},
   date={2021},
}

\bib{yamayamaharehare}{article}{
   author={Sakamoto, Nobuyuki},
   author={Yamazaki, Takeshi},
   title={Uniform versions of some axioms of second order arithmetic},
   journal={MLQ Math. Log. Q.},
   volume={50},
   date={2004},
   number={6},
   pages={587--593},
}

\bib{samnetspilot}{article}{
  author={Sanders, Sam},
  title={Nets and Reverse Mathematics: a pilot study},
  year={2019},
  journal={Computability, \url {doi: 10.3233/COM-190265}},
  pages={pp.\ 34},
}

\bib{samph}{article}{
  author={Sanders, Sam},
  title={Plato and the foundations of mathematics},
  year={2019},
  journal={Submitted, arxiv: \url {https://arxiv.org/abs/1908.05676}},
  pages={pp.\ 40},
}

\bib{samsplit}{article}{
  author={Sanders, Sam},
  title={Splittings and disjunctions in reverse mathematics},
  journal={Notre Dame J. Form. Log.},
  volume={61},
  date={2020},
  number={1},
  pages={51--74},
}

\bib{sahotop}{article}{
  author={Sanders, Sam},
  title={Reverse Mathematics of topology: dimension, paracompactness, and splittings},
  year={2020},
  journal={Notre Dame Journal for Formal Logic},
  pages={537-559},
  volume={61},
  number={4},
}

\bib{simpson1}{collection}{
  title={Reverse mathematics 2001},
  series={Lecture Notes in Logic},
  volume={21},
  editor={Simpson, Stephen G.},
  publisher={ASL},
  date={2005},
  pages={x+401},
}

\bib{simpson2}{book}{
  author={Simpson, Stephen G.},
  title={Subsystems of second order arithmetic},
  series={Perspectives in Logic},
  edition={2},
  publisher={Cambridge University Press},
  date={2009},
  pages={xvi+444},
}

\bib{stillebron}{book}{
  author={Stillwell, J.},
  title={Reverse mathematics, proofs from the inside out},
  pages={xiii + 182},
  year={2018},
  publisher={Princeton Univ.\ Press},
}

\bib{zwette}{book}{
  author={Swartz, Charles},
  title={Introduction to gauge integrals},
  publisher={World Scientific},
  date={2001},
  pages={x+157},
}

\bib{troelstra1}{book}{
  author={Troelstra, Anne Sjerp},
  title={Metamathematical investigation of intuitionistic arithmetic and analysis},
  note={Lecture Notes in Mathematics, Vol.\ 344},
  publisher={Springer Berlin},
  date={1973},
  pages={xv+485},
}

\bib{troeleke1}{book}{
  author={Troelstra, Anne Sjerp},
  author={van Dalen, Dirk},
  title={Constructivism in mathematics. Vol. I},
  series={Stud. in Logic and the Found. of Math.},
  volume={121},
  publisher={North-Holland},
  date={1988},
  pages={xx+342+XIV},
}

\bib{vitaliorg}{article}{
  author={Vitali, Guiseppe},
  title={Sui gruppi di punti e sulle funzioni di variabili reali.},
  journal={Atti della Accademia delle Scienze di Torino, vol XLIII},
  date={1907},
  number={4},
  pages={229--247},
}

\end{biblist}

\appendix

\section{Reverse Mathematics: second- and higher-order}\label{app}

\subsection{Reverse Mathematics}\label{kapp}
Reverse Mathematics (RM hereafter) is a program in the foundations of mathematics initiated around 1975 by Friedman (\cites{fried,fried2}) and developed extensively by Simpson (\cite{simpson2}).  
The aim of RM is to identify the minimal axioms needed to prove theorems of ordinary, i.e.\ non-set theoretical, mathematics. 
We refer to \cite{stillebron} for a basic introduction to RM and to \cite{simpson2, simpson1} for an overview of RM.  
The details of Kohlenbach's \emph{higher-order} RM may be found in \cite{kohlenbach2}, including the base theory $\RCAo$.  
The latter is connected to $\RCA_{0}$ by the $\ECF$-translation as follows. 
\begin{rem}[The $\ECF$-interpretation]\label{ECF}\rm
The (rather) technical definition of $\ECF$ may be found in \cite{troelstra1}*{p.\ 138, \S2.6}.
Intuitively, the $\ECF$-interpretation $[A]_{\ECF}$ of a formula $A\in \L_{\omega}$ is just $A$ with all variables 
of type two and higher replaced by type one variables ranging over so-called `associates' or `RM-codes'; the latter are (countable) representations of continuous functionals.  
The $\ECF$-interpretation connects $\RCAo$ and $\RCA_{0}$ (see \cite{kohlenbach2}*{Prop.\ 3.1}) in that if $\RCAo$ proves $A$, then $\RCA_{0}$ proves $[A]_{\ECF}$, again `up to language', as $\RCA_{0}$ is 
formulated using sets, and $[A]_{\ECF}$ is formulated using types, i.e.\ using type zero and one objects.  
\end{rem}
In light of the widespread use of codes in RM and the common practise of identifying codes with the objects being coded, it is no exaggeration to refer to $\ECF$ as the \emph{canonical} embedding of higher-order into second-order arithmetic. 

\smallskip

We now introduce the usual notations for common mathematical notions.  
\begin{defi}[Real numbers and related notions in $\RCAo$]\label{keepintireal}\rm~
\begin{enumerate}
 \renewcommand{\theenumi}{\alph{enumi}}
\item Natural numbers correspond to type zero objects, and we use `$n^{0}$' and `$n\in \N$' interchangeably.  Rational numbers are defined as signed quotients of natural numbers, and `$q\in \Q$' and `$<_{\Q}$' have their usual meaning.    
\item Real numbers are coded by fast-converging Cauchy sequences $q_{(\cdot)}:\N\di \Q$, i.e.\  such that $(\forall n^{0}, i^{0})(|q_{n}-q_{n+i}|<_{\Q} \frac{1}{2^{n}})$.  
We use Kohlenbach's `hat function' from \cite{kohlenbach2}*{p.\ 289} to guarantee that every $q^{1}$ defines a real number.  
\item We write `$x\in \R$' to express that $x^{1}:=(q^{1}_{(\cdot)})$ represents a real as in the previous item and write $[x](k):=q_{k}$ for the $k$-th approximation of $x$.    
\item Two reals $x, y$ represented by $q_{(\cdot)}$ and $r_{(\cdot)}$ are \emph{equal}, denoted $x=_{\R}y$, if $(\forall n^{0})(|q_{n}-r_{n}|\leq {2^{-n+1}})$. Inequality `$<_{\R}$' is defined similarly.  
We sometimes omit the subscript `$\R$' if it is clear from context.           
\item Functions $F:\R\di \R$ are represented by $\Phi^{1\di 1}$ mapping equal reals to equal reals, i.e.\ extensionality as in $(\forall x , y\in \R)(x=_{\R}y\di \Phi(x)=_{\R}\Phi(y))$.\label{EXTEN}
\item Binary sequences are denoted `$f,g\in C$' or `$f, g\in 2^{\N}$'.  Elements of Baire space are given by $f^{1}, g^{1}$, but also denoted `$f, g\in \N^{\N}$'.
\end{enumerate}
\end{defi}
\begin{nota}[Finite sequences]\label{skim}\rm
The type for `finite sequences of objects of type $\rho$' is denoted $\rho^{*}$, which we shall only use for $\rho=0,1$.  
Since the usual coding of pairs of numbers goes through in $\RCAo$, we shall not always distinguish between $0$ and $0^{*}$. 
Similarly, we assume a fixed coding for finite sequences of type $1$ and shall make use of the type `$1^{*}$'.  
In general, we do not always distinguish between `$s^{\rho}$' and `$\langle s^{\rho}\rangle$', where the former is `the object $s$ of type $\rho$', and the latter is `the sequence of type $\rho^{*}$ with only element $s^{\rho}$'.  The empty sequence for the type $\rho^{*}$ is denoted by `$\langle \rangle_{\rho}$', usually with the typing omitted.  
Furthermore, we denote by `$|s|=n$' the length of the finite sequence $s^{\rho^{*}}=\langle s_{0}^{\rho},s_{1}^{\rho},\dots,s_{n-1}^{\rho}\rangle$, where $|\langle\rangle|=0$, i.e.\ the empty sequence has length zero.  For sequences $s^{\rho^{*}}, t^{\rho^{*}}$, we denote by `$s*t$' the concatenation of $s$ and $t$, i.e.\ $(s*t)(i)=s(i)$ for $i<|s|$ and $(s*t)(j)=t(|s|-j)$ for $|s|\leq j< |s|+|t|$. For a sequence $s^{\rho^{*}}$, we define $\overline{s}N:=\langle s(0), s(1), \dots,  s(N-1)\rangle $ for $N^{0}<|s|$.  
For a sequence $\alpha^{0\di \rho}$, we also write $\overline{\alpha}N=\langle \alpha(0), \alpha(1),\dots, \alpha(N-1)\rangle$ for \emph{any} $N^{0}$.  Finally, 
$(\forall q^{\rho}\in Q^{\rho^{*}})A(q)$ abbreviates $(\forall i^{0}<|Q|)A(Q(i))$, which is (equivalent to) quantifier-free if $A$ is.   
\end{nota}
\subsection{Further systems}\label{further}
We define some standard higher-order systems that constitute the counterpart of e.g.\ $\FIVE$ and $\Z_{2}$.
First of all, \emph{the Suslin functional} $\SS^{2}$ is defined in \cite{kohlenbach2} as:
\be\tag{$\SS^{2}$}
(\exists\SS^{2}\leq_{2}1)(\forall f^{1})\big[  (\exists g^{1})(\forall n^{0})(f(\overline{g}n)=0)\asa \SS(f)=0  \big].
\ee
The system $\FIVE^{\omega}\equiv \RCAo+(\SS^{2})$ proves the same $\Pi_{3}^{1}$-sentences as $\FIVE$ by \cite{yamayamaharehare}*{Theorem 2.2}.   
By definition, the Suslin functional $\SS^{2}$ can decide whether a $\Sigma_{1}^{1}$-formula as in the left-hand side of $(\SS^{2})$ is true or false.   We similarly define the functional $\SS_{k}^{2}$ which decides the truth or falsity of $\Sigma_{k}^{1}$-formulas from $\L_{2}$; we also define 
the system $\SIXK$ as $\RCAo+(\SS_{k}^{2})$, where  $(\SS_{k}^{2})$ expresses that $\SS_{k}^{2}$ exists.  
We note that the operators $\nu_{n}$ from \cite{boekskeopendoen}*{p.\ 129} are essentially $\SS_{n}^{2}$ strengthened to return a witness (if existant) to the $\Sigma_{n}^{1}$-formula at hand.  

\smallskip

\noindent
Secondly, second-order arithmetic $\Z_{2}$ readily follows from $\cup_{k}\SIXK$, or from:
\be\tag{$\exists^{3}$}
(\exists E^{3}\leq_{3}1)(\forall Y^{2})\big[  (\exists f^{1})(Y(f)=0)\asa E(Y)=0  \big], 
\ee
and we therefore define $\Z_{2}^{\Omega}\equiv \RCAo+(\exists^{3})$ and $\Z_{2}^\omega\equiv \cup_{k}\SIXK$, which are conservative over $\Z_{2}$ by \cite{hunterphd}*{Cor.\ 2.6}. 
Despite this close connection, $\Z_{2}^{\omega}$ and $\Z_{2}^{\Omega}$ can behave quite differently, as discussed in e.g.\ \cite{dagsamIII}*{\S2.2}.   
The functional from $(\exists^{3})$ is also called `$\exists^{3}$', and we use the same convention for other functionals.

%
%
%
%
%
%
\end{document}